\setlist{leftmargin=8mm}
\newcommand{\K}{\mathscr{K}}
\newcommand{\R}{\mathbb{R}}
\newcommand{\N}{\mathbb{N}}
\newcommand{\dM}{{\partial M}}
\renewcommand{\phi}{\varphi}
\newcommand{\ch}{\mathsf{ch}}
\renewcommand{\c}{\mathsf{c}}
\newcommand{\scal}{\mathrm{scal}}
\newcommand{\<}{\left\langle}
\renewcommand{\>}{\right\rangle}
\newcommand{\cw}{\text{-}\mathrm{cw}_2}
\newcommand{\rk}{\mathrm{rk}}
\DeclareMathOperator{\id}{\mathrm{id}}
\newtheorem{theorem}{Theorem}
\newtheorem{lemma}[theorem]{Lemma}
\newtheorem{corollary}[theorem]{Corollary}
\theoremstyle{definition}
\newtheorem{remark}[theorem]{Remark}
\newtheorem{definition}[theorem]{Definition} 
\newtheorem{example}[theorem]{Example}
\begin{document}

\title{\emph{K}-cowaist of manifolds with boundary} 


\author{Christian B\"ar}
\address{Universit\"at Potsdam, Institut f\"ur Mathematik, 14476 Potsdam, Germany}
\email{\href{mailto:cbaer@uni-potsdam.de}{cbaer@uni-potsdam.de}}
\urladdr{\url{https://www.math.uni-potsdam.de/baer}}
\author{Bernhard Hanke}
\address{Universit\"at Augsburg, Institut f\"ur Mathematik, 86135 Augsburg, Germany}
\email{\href{mailto:hanke@math.uni-augsburg.de}{hanke@math.uni-augsburg.de}}
\urladdr{\url{https://www.math.uni-augsburg.de/diff/hanke}}

\begin{abstract} 
We extend the $K$-cowaist inequality to generalized Dirac operators in the sense of Gromov and Lawson and study applications to manifolds with boundary. 
\end{abstract}


\keywords{Manifolds with boundary, lower scalar curvature bounds, lower mean curvature bounds, Atiyah-Patodi-Singer index formula, $K$-cowaist, $\omega$-cowaist}


\subjclass[2010]{53C21, 53C23; Secondary: 53C27, 58J20}

\thanks{\emph{Acknowledgment.} 
We thank the Special Priority Programme SPP 2026 ``Geometry at Infinity'' funded by Deutsche Forschungsgemeinschaft for financial support. 
B.H.\ thanks the University of Potsdam for its hospitality.}

\date{September 14, 2023}

\maketitle

\section{Introduction} 

The notion of $K$-cowaist of closed oriented smooth Riemannian manifolds was introduced by Gromov in \cite{G1996}*{Section 4} under the name $K$-area.
It is defined as the inverse of the infimum of the operator norms of the curvatures of all smooth Hermitian vector bundles over the given manifold with at least one non-zero Chern number. 
Its main application is to the scalar curvature geometry of spin manifolds, see the ``$K$-area inequality'' in  \cite{G1996}*{Section 5$\tfrac{1}{4}$} . 

The notion was later generalized to homology classes in smooth Riemannian manifolds by Listing in \cite{Listing2013}, and in simplicial complexes by Hunger in \cite{Hunger2019}, to infinite dimensional bundles by Hanke and Hunger in \cites{Hanke2012, Hunger2021} and to manifolds with boundary by Bär, Hanke, and Listing in \cites{Listing2013, BH2023}. 
The behavior of $K$-cowaist under surgery has been studied by Fukumoto in \cite{Fukumoto2015}.

An important step in this discussion is the construction of well-behaved twist bundles for the classical Dirac operator on spin manifolds from vector bundles with non-zero Chern numbers. 
The elegant argument, which uses $K$-theoretic Adams operations together with a ``trivial algebraic lemma'' on formal power series, was sketched by Gromov in \cite{G1996}*{Section 5$\tfrac{3}{8}$}, and more detailed expositions were later given by Bär, Hanke, and Listing in \cites{Listing2013, BH2023}. 
Recently, another exposition was given by Wang in \cite{Wang2023}.

In the present note, which is in line with \cite{BH2023}, we extend the discussion of $K$-cowaist in two directions.
Firstly, rather than confining ourselves to the spinorial Dirac operator, we allow arbitrary generalized Dirac operators in the sense of Gromov and Lawson and associate to each such operator a notion of $\omega$-cowaist.
Here $\omega$ represents the index form of the operator occurring in the Atiyah-Singer index theorem.
In the case of the spinorial Dirac operator, $\omega$ is the $\hat{A}$-form.
Theorem~\ref{thm:WaistComparison} compares the $K$-cowaist and the $\omega$-cowaist. 
Interestingly, the constant occurring in this $K$-cowaist inequality depends only on the dimension of the manifold but not on the choice of operator.

Secondly, in the application we allow the manifolds to have boundary.
Using the spinorial Dirac operator, one recovers the known fact that a compact spin manifold with infinite $K$-cowaist does not support a Riemannian metric of positive scalar curvature such that the boundary becomes mean convex.
For compact spin$^c$ manifolds with infinite $K$-cowaist we find that there is no Riemannian metric such that the scalar curvature dominates any $2$-form representing the Chern class of the determinant bundle and, again, such that the boundary becomes mean convex.
Remarkably, the condition on the boundary is always mean convexity, irrespective of the choice of operator.

\section{\texorpdfstring{$K$-cowaist}{K-cowaist}} 
\label{infiniteKarea} 

Let $M$ be an oriented compact smooth Riemannian manifold with or without boundary.
We call a Hermitian vector bundle $E$ over $M$ with connection \emph{boundary-adapted} if it is isomorphic to the trivial bundle with trivial connection over a neighborhood of the boundary.
We call $E$ \emph{admissible} if it is boundary-adapted and it has at least one nontrivial Chern number.
The latter means that there are $\gamma_j\in\N_0$ such that 
$$
\int_M \c_{\gamma_1}(E) \wedge \cdots \wedge \c_{\gamma_m}(E) \neq 0.
$$
Here $\c(E) = \c_0(E)+\c_1(E)+\ldots+\c_m(E) = 1+\c_1(E)+\ldots+\c_m(E)$ is the Chern form of $E$.
Admissible bundles can exist only on even-dimensional manifolds because $\c_j(E)$ has even degree $2j$.
Indeed, the dimension of $M$ satisfies $n=2(\gamma_1+\ldots+\gamma_m)$.

Equivalently, one may demand that 
$$
\int_M \ch_{\gamma_1}(E) \wedge \cdots \wedge \ch_{\gamma_m}(E) \neq 0
$$
for some $\gamma_j\in\N_0$.
Here $\ch(E) = \ch_0(E)+\ch_1(E)+\ldots+\ch_m(E) = \mathrm{rank}(E)+\ch_1(E)+\ldots+\ch_m(E)$ is the Chern character form of $E$.
The Chern numbers and the Chern character numbers can be expressed as linear combinations of each other.

Note that the support of the curvature $R^E$ and hence that of $\c_j(E)$ and $\ch_j(E)$ for $j\ge1$ is contained in the interior of $M$ because $M$ is boundary-adapted.

Given a Hermitian vector bundle with connection over a Riemannian manifold $M$, let $R^E$ be its curvature tensor.
We define its norm by
\[
\|R^E\| := \sup_{x\in M} \sup_{\genfrac{}{}{0pt}{}{X,Y\in T_xM}{|X|=|Y|=1}} |R^E(X,Y)|
\]
where $|R^E(X,Y)|$ is the operator norm of the endomorphism $R^E(X,Y)$.
The rank of $E$ is denoted by $\rk(E)$.

\begin{definition}\label{def:Kcowaist}
The \emph{$K$-cowaist} of an oriented compact Riemannian manifold $M$ with (possibly empty) boundary is defined by
\[
K\cw(M) :=    \frac{1}{\inf\{\|R^E\|\mid E \text{ is an admissible bundle over }M\}} \in [0,\infty] .
\]
\end{definition}

\begin{remark}
If we replace the Riemannian metric $g$ on $M$ by $\lambda^2g$, where $\lambda$ is a positive constant, the operator norm $|R^E(X,Y)|$ for $X, Y \in T_x M$ remains unchanged, whereas $\|R^E\|$ is replaced by $\lambda^{-2}\|R^E\|$.
Therefore, $K\cw(M)$ is replaced by $\lambda^2K\cw(M)$, thus $K\cw(M)$ scales like an area.
This motivates the terminology $K$-area for $K\cw(M)$ as introduced by Gromov in \cite{G1996}.
In \cite{Gromov2020}, Gromov argues that the term $K$-cowaist is more appropriate.
\end{remark}

\begin{remark}
The condition $K\cw(M)=\infty$ is independent of the metric on $M$ since $M$ is compact and any two metrics can be bounded by each other.
There is a rich class of manifolds satisfying this condition, including enlargeable manifolds, see \cite{BH2023}, for example. 
If $M$ is connected and without boundary, the condition $K\cw(M) = \infty$ only depends on the image of the fundamental class of $M$ in the rational homology of $B\pi_1(M)$ under the classifying map of the universal cover of $M$, see Corollary~7.4 in \cite{Hunger2019}.  
\end{remark}

\begin{definition}
Let $\omega=1+\omega_1 + \ldots + \omega_m$ be a smooth mixed differential form on the oriented compact Riemannian manifold $M$ with boundary, where $\omega_j$ has degree $2j$.
The $\omega$-cowaist of $M$ is defined by 
\begin{align*}
\omega\cw(M) &:=  \frac{1}{\inf\big\{\|R^E\|\mid E \text{ is boundary-adapted and } \int_M\omega\wedge [\ch(E)-\rk(E)] \neq 0\big\} } 
\in [0,\infty] .
\end{align*}
\end{definition}

The following lemma is the key to comparing the $K$-cowaist and the $\omega$-cowaist.
The idea goes back to Gromov \cite{G1996} and the lemma is essentially already contained as Lemma~7 in \cite{BH2023}.
For the reader's convenience, we provide the full (short) proof here.

\begin{lemma}\label{lem:KversusO}
Let $M$ be an oriented compact Riemannian manifold of even dimension $n=2m$ with boundary.
Let $E$ be an admissible bundle.
Let $\omega=1+\omega_1 + \ldots + \omega_m$ be a smooth mixed differential form on $M$ where $\omega_j$ has degree $2j$.
   
Then there exists a boundary-adapted bundle $E'$ over $M$ such that 
\begin{equation}
\int_M \omega\wedge[\ch(E')-\rk(E')] \neq 0
\label{eq:IntNotTriv}
\end{equation}
and 
\begin{equation}
\|R^{E'}\|\le c(m) \|R^E\|
\label{eq:CurvEst}
\end{equation}
where $c(m)$ is a constant only depending on $m$.
\end{lemma}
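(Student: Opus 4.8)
The plan is to follow Gromov's Adams-operation trick. Start from the admissible bundle $E$ with $\int_M \c_{\gamma_1}(E)\wedge\cdots\wedge\c_{\gamma_m}(E)\neq 0$, so that the top-degree component of $\ch(E)$ pairs nontrivially against something, but the issue is that $\int_M\omega\wedge[\ch(E)-\rk(E)]$ might vanish because of cancellations among the mixed-degree terms. The strategy is to replace $E$ by a suitable virtual combination of its Adams operations $\psi^k(E)$, which scales the degree-$2j$ part of $\ch$ by $k^j$, and to choose coefficients so that the resulting characteristic form pairs nontrivially with $\omega$.

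Concretely, first I would reduce to a statement about the Chern character. Write $\ch_j(E)$ for the degree-$2j$ component. The Adams operation $\psi^k$ acts on $\ch_j$ by multiplication by $k^j$, i.e. $\ch_j(\psi^k E) = k^j\,\ch_j(E)$, at the level of forms once one uses the Chern--Weil representatives (this is the ``trivial algebraic lemma'' on formal power series alluded to in the introduction). Since $\psi^k$ is a ring homomorphism on $K$-theory and is realized by an operation on bundles (after stabilizing), $\psi^k E$ can be represented by an honest bundle minus a trivial bundle, hence by a boundary-adapted bundle up to adding trivial summands, with curvature norm controlled: the operation $\psi^k$ is a universal polynomial (a Newton polynomial in exterior powers), so $\|R^{\psi^k E}\|\le C(k,m)\|R^E\|$. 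Then form $E' := \sum_{k=1}^{m} a_k\,\psi^k(E)$ for integers $a_k$ to be chosen; this is a virtual bundle, and after adding a trivial bundle it is represented by an actual boundary-adapted bundle, with $\|R^{E'}\|\le c(m)\|R^E\|$ where $c(m)$ absorbs the $a_k$ and the $C(k,m)$ — crucially $c(m)$ depends only on $m$ because we only need $k$ up to $m$ and the $a_k$ are chosen from a fixed finite list.

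The core algebraic point is the choice of the $a_k$. We have $\int_M\omega\wedge[\ch(E')-\rk(E')] = \sum_j \int_M \omega_{m-j}\wedge \ch_j(E')$ where $\omega_0=1$, and $\ch_j(E') = \big(\sum_k a_k k^j\big)\ch_j(E)$. Setting $p_j := \int_M\omega_{m-j}\wedge\ch_j(E)$ (a real number), the expression becomes $\sum_{j=1}^m \big(\sum_{k=1}^m a_k k^j\big) p_j$. Since $E$ is admissible, some $\ch$-number is nonzero, and expressing Chern numbers via Chern character numbers shows that not all $p_j$ vanish — actually one needs a little care: admissibility gives $\int_M \ch_{\gamma_1}(E)\wedge\cdots\neq 0$ for some partition, but here $\omega$ is fixed, so I would instead argue that the function $t\mapsto \int_M\omega\wedge\ch(E\text{ with }\ch_j\text{ scaled by }t^j)$ is a polynomial in $t$ that is not identically zero — its top coefficient involves $\int_M \ch_m(E)$, which is a Chern character number of $E$ and is nonzero because $E$ is admissible (every top Chern-character number being zero would force every top Chern number to be zero). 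Wait — that last implication needs the fact that $\ch_m$ alone, not products, is what appears; since $\dim M = 2m$, the only degree-$n$ form built from $\ch(E)$ is $\ch_m(E)$ together with products $\ch_{j_1}(E)\wedge\cdots$, but in $\int_M\omega\wedge\ch(E)$ only $\omega_0\wedge\ch_m(E)=\ch_m(E)$ contributes at the top, so I should rather observe: the polynomial $P(t) := \int_M\omega\wedge\big(1 + t\,\ch_1(E) + t^2\ch_2(E)+\cdots\big)$ has $P(0)=0$ and if $P\equiv 0$ then all $p_j=0$; in particular $p_m = \int_M\ch_m(E)=0$. So I must rule out $\ch_m(E)=0$ while $E$ is admissible. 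This is the delicate step, and I would handle it by noting admissibility is equivalent (as stated in the excerpt) to some $\int_M\ch_{\gamma_1}(E)\wedge\cdots\wedge\ch_{\gamma_m}(E)\neq 0$; if all $\gamma_i>0$ this is a product of lower Chern characters, which need not force $\ch_m\neq 0$ — so the reduction must instead replace $\omega$ by a more refined multiplicative form or, better, observe that what we really need is just \emph{some} boundary-adapted $E'$ with nonzero pairing, and we are free to also take wedge powers/products, not only Adams operations of $E$ itself.

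Given that subtlety, the cleanest route — and the one I expect the authors take — is: by admissibility choose $\gamma_1,\dots,\gamma_m$ with $\int_M\ch_{\gamma_1}(E)\wedge\cdots\wedge\ch_{\gamma_m}(E)\neq 0$, and consider the virtual bundle $E^{\otimes m}$ (or $E\oplus\cdots\oplus E$ then tensor), whose Chern character is $\ch(E)^m$; among its degree-$n$ terms is a nonzero multiple of $\int_M\ch_{\gamma_1}(E)\wedge\cdots$. Now apply the Adams-operation scaling argument to the \emph{tuple} of bundles, i.e. consider $\psi^{k_1}(E)\otimes\cdots\otimes\psi^{k_m}(E)$: its degree-$n$ Chern-character component is $\sum_{\delta_1+\cdots+\delta_m=m} k_1^{\delta_1}\cdots k_m^{\delta_m}\,\ch_{\delta_1}(E)\wedge\cdots\wedge\ch_{\delta_m}(E)$. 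By a Vandermonde-type argument (choosing $m$ distinct values of each $k_i$ and taking an appropriate finite integer linear combination, exactly the ``trivial algebraic lemma'') one isolates the term with $\delta_i=\gamma_i$, producing a virtual boundary-adapted bundle $E'$ whose \emph{only} surviving degree-$n$ contribution to $\int_M\omega\wedge\ch(E')$ (after multiplying $\omega$ in — here $\omega_0=1$ picks out degree $n$) is a nonzero multiple of the chosen Chern-character number, and lower-degree contributions are irrelevant since they wedge with $\omega_{>0}$ to land above degree $n$ or below and integrate to zero except... no: $\omega_{m-j}\wedge\ch_j(E')$ with $j<m$ does contribute. So one more scaling (an overall Adams operation $\psi^K$ for large $K$, or the single-parameter polynomial $P(t)$ argument applied to the already-isolated bundle) makes the top term dominate and hence $P$ nonzero for some integer $t$, giving \eqref{eq:IntNotTriv}. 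The curvature estimate \eqref{eq:CurvEst} holds throughout because every operation used — direct sums, tensor products of boundedly many copies, Adams operations $\psi^k$ for $k$ and number of factors bounded in terms of $m$, and finite integer linear combinations with coefficients from a list depending only on $m$ — multiplies $\|R^E\|$ by a factor $c(m)$ depending only on $m$ (the tensor-product curvature of $F\otimes G$ has norm at most $\|R^F\|\cdot\rk(G)+\rk(F)\cdot\|R^G\|$, which after normalization and the boundary-adaptedness is controlled). The main obstacle is precisely bookkeeping this last step: ensuring the combinatorial selection (the Vandermonde inversion plus the final domination argument) uses only finitely many operations with a uniform bound, so that $c(m)$ genuinely depends on $m$ alone and not on $\omega$ or $E$ — matching the remark in the introduction that the constant is operator-independent.
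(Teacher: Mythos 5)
Your overall strategy is the right one --- the paper also uses the multi-parameter Adams construction $\Psi_{k_1}E\otimes\cdots\otimes\Psi_{k_m}E$ precisely because, as you correctly observe, the single Adams operation fails (admissibility does not force $\ch_m(E)\ne0$). But your final step has a genuine gap, and you say so yourself. After Vandermonde-inverting to isolate the $\delta=\gamma$ coefficient, you propose to deal with the contributions $\int_M\omega_{m-j}\wedge\ch_j(E')$, $j<m$, by applying $\psi^K$ for ``large $K$'' so that ``the top term dominates.'' How large $K$ must be depends on the sizes of the lower-degree integrals relative to the top one, which in turn depend on $\omega$ and $E$, not only on $m$. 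So this route does not obviously produce a constant $c(m)$ depending on $m$ alone, which is exactly the ``main obstacle'' you flag at the end without resolving.

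The paper's argument avoids both the Vandermonde inversion and the domination step entirely. It defines
\[
P(k_1,\ldots,k_m) := \int_M \omega\wedge\bigl[\ch(\Psi_{k_1}E\otimes\cdots\otimes\Psi_{k_m}E)-\rk(E)^m\bigr],
\]
notes that $P$ is a polynomial in $(k_1,\ldots,k_m)$ of total degree at most $m$ whose top-degree coefficients are exactly the Chern character numbers $\int_M\ch_{\gamma_1}(E)\wedge\cdots\wedge\ch_{\gamma_m}(E)$, and observes that admissibility means at least one of these is nonzero, so $P$ is a nontrivial polynomial. A nontrivial polynomial of degree $\le m$ in each variable cannot vanish on the entire grid $\{0,1,\ldots,m\}^m$, so some $k$ in this fixed finite box has $P(k)\ne0$; then one of the two honest boundary-adapted summands $\Psi_k^{\pm}E$ works. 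Since $k$ ranges over a finite set depending only on $m$, and each $\Psi_k^{\pm}$ is a universal polynomial in exterior powers of $E$, the curvature bound $\|R^{E'}\|\le c(m)\|R^E\|$ follows immediately. Your ``single-parameter polynomial $P(t)$ argument applied to the already-isolated bundle,'' which you mention in passing as an alternative, is the correct seed of the idea; the fix is to apply that polynomial-nonvanishing reasoning directly to the multi-variable $P(k_1,\ldots,k_m)$ rather than first isolating a term and then trying to make it dominate.
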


\begin{proof}
For $k\in\N_0$ there is a virtual bundle $\Psi_kE = \Psi_k^+E - \Psi_k^-E$ with the property 
\begin{equation}
\ch_j(\Psi_k E)=\ch_j(\Psi_k^+ E)-\ch_j(\Psi_k^- E)=k^j\ch_j(E).
\label{eq:chAdams}
\end{equation}
Here $\Psi_k$ is known as the $k$th Adams operation.
The case $j=0$ shows that the Adams operations $\Psi_k$ preserve the rank.
Both bundles $\Psi_k^+E$ and $\Psi_k^- E$ are universal expressions in tensor products of exterior products of $E$, see \cite{A1989}*{Section~3.2} for details.

For a multi-index $k=(k_1,\ldots,k_m)$ we put 
$$
\Psi_kE := \Psi_{k_1}E\otimes\cdots\otimes\Psi_{k_m}E
$$
and rewrite this virtual bundle as a difference of honest bundles by
\begin{align*}
\Psi_kE 
&=
\bigoplus_{\genfrac{}{}{0pt}{}{\mathrm{even\,\#}}{\mathrm{of}\,-\,\mathrm{'s}}}\Psi_{k_1}^\pm E\otimes\cdots\otimes\Psi_{k_m}^\pm E -\bigoplus_{\genfrac{}{}{0pt}{}{\mathrm{odd\,\#}}{\mathrm{of}\,-\,\mathrm{'s}}}\Psi_{k_1}^\pm E\otimes\cdots\otimes\Psi_{k_m}^\pm E 
=: \Psi^+_kE - \Psi^-_kE .
\end{align*}
Again, $\Psi_k^+E$ and $\Psi_k^- E$ are universal expressions in tensor products of exterior products of $E$.
Hence, they inherit natural Hermitian metrics and connections, and they are boundary-adapted.
In particular,
\begin{equation}
\|R^{\Psi_k^\pm E}\|\le c_k \|R^E\|
\label{eq:AdamsR}
\end{equation}
where the constant $c_k$ depends only on $k$.
Note that $\rk(\Psi_k^+E)-\rk(\Psi_k^-E)=\rk(\Psi_kE)=\rk(E)^m$. 

For $k=(k_1,\ldots,k_m)\in\N_0^m$ we put 
\begin{align*}
P(k_1,\ldots,k_m)
&:=
\int_M \omega\wedge[\ch(\Psi_{k}E)-\rk(\Psi_{k}E)] \\
&=
\int_M \omega\wedge[\ch(\Psi_{k_1}E)\wedge\cdots\wedge\ch(\Psi_{k_m}E)-\rk(E)^m] .
\end{align*}
Expanding $\omega = 1+\omega_1 + \ldots + \omega_{m}$ and the Chern characters yields, using \eqref{eq:chAdams},
$$
P(k_1,\ldots,k_m)
=
\sum_{\gamma_1+\ldots+\gamma_m=m} k_1^{\gamma_1}\cdots k_m^{\gamma_m} \int_M \ch_{\gamma_1}(E)\wedge\cdots\wedge\ch_{\gamma_m}(E) + \mbox{l.o.t.}
$$
where l.o.t.\ stands for terms of lower total order in $k_1,\ldots,k_m$.
In particular, $P$ is a polynomial in $k_1,\ldots,k_m$ of total degree at most $m$.

If $P(k_1,\ldots,k_m)=0$ held for all $k = (k_1, \ldots, k_m)  \in \{0,1,\ldots,m\}^m$, then $P$ would vanish as a polynomial, hence
$$
\int_M \ch_{\gamma_1}(E)\wedge\cdots\wedge\ch_{\gamma_m}(E) = 0
$$
for all $\gamma_i\in\N_0$ with $\gamma_1+\ldots+\gamma_m=m$, contradicting the admissibility of $E$.
Thus we can choose some $k\in \{0,1,\ldots,m\}^m$ such that $P(k)\neq0$, i.e.\ 
\begin{align*}
0 
&\neq 
\int_M \omega\wedge[\ch(\Psi_{k}E)-\rk(\Psi_{k}E)] \\
&=  
\int_M \omega\wedge[\ch(\Psi_{k}^+E)-\rk(\Psi_{k}^+E)]-\int_M \omega\wedge[\ch(\Psi_{k}^-E)-\rk(\Psi_{k}^-E)] .     
\end{align*}
Hence, $E'=\Psi_{k}^+E$ or $E'=\Psi_{k}^-E$ satisfies \eqref{eq:IntNotTriv}.
Equation~\eqref{eq:AdamsR} implies $\|R^{\Psi^\pm_kE}\|\le c(m) \|R^E\|$ since there are only finitely many possibilities for~$k$.
\end{proof}

\begin{theorem}\label{thm:WaistComparison}
Let $M$ be an oriented compact Riemannian manifold with boundary of even dimension $2m$. 
Let $\omega=1+\omega_1 + \ldots + \omega_m$ be a smooth mixed differential form on $M$ where $\omega_j$ has degree $2j$.
Then
\[
K\cw(M) \le c(m)\cdot \omega\cw(M)   
\]
where $c(m)$ is a constant which depends only on $m$.
\end{theorem}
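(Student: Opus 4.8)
The plan is to derive Theorem~\ref{thm:WaistComparison} as an essentially immediate consequence of Lemma~\ref{lem:KversusO}, by comparing the two infima defining the cowaists. Set
\[
a := \inf\{\|R^E\|\mid E\text{ admissible over }M\},
\]
\[
b := \inf\{\|R^{E'}\|\mid E'\text{ boundary-adapted and }\textstyle\int_M\omega\wedge[\ch(E')-\rk(E')]\neq 0\},
\]
so that $K\cw(M)=1/a$ and $\omega\cw(M)=1/b$ in $[0,\infty]$, using the conventions $1/0=\infty$, $1/\infty=0$, and $\inf\emptyset=+\infty$. Under these conventions the asserted inequality $K\cw(M)\le c(m)\,\omega\cw(M)$ is equivalent to $b\le c(m)\,a$, and this is what I would prove.

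First I would dispose of the degenerate case in which $M$ carries no admissible bundle: then $a=+\infty$, so $K\cw(M)=0$ and there is nothing to show. Otherwise, I fix an arbitrary admissible bundle $E$ on $M$. Applying Lemma~\ref{lem:KversusO} to $E$ and to the given form $\omega=1+\omega_1+\ldots+\omega_m$ produces a boundary-adapted bundle $E'$ with $\int_M\omega\wedge[\ch(E')-\rk(E')]\neq 0$ and $\|R^{E'}\|\le c(m)\|R^E\|$. In particular $E'$ is admissible as a competitor in the infimum defining $b$, whence $b\le\|R^{E'}\|\le c(m)\|R^E\|$. Since the admissible bundle $E$ was arbitrary, taking the infimum over all such $E$ on the right-hand side yields $b\le c(m)\,a$.

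Finally I would pass to reciprocals to conclude $K\cw(M)=1/a\le c(m)/b=c(m)\,\omega\cw(M)$, checking that the boundary cases behave correctly: if $a=0$ then $b\le c(m)\cdot 0=0$, so both cowaists equal $\infty$ and the inequality reads $\infty\le c(m)\cdot\infty$; if $b=0$ while $a>0$, the left-hand side is finite and the right-hand side is $\infty$; and if $b=\infty$ then $a=\infty$ as well. I do not expect any genuine obstacle here — the entire mathematical content sits in Lemma~\ref{lem:KversusO}, and what remains is only the bookkeeping of the extended-real-valued infima and the empty-competitor case. The one point worth stating carefully is that the constant $c(m)$ is literally the one from the lemma, so it depends only on $m=\tfrac12\dim M$ and not on $\omega$.
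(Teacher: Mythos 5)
Your argument is correct and is essentially identical to the paper's: dispose of the case with no admissible bundles, apply Lemma~\ref{lem:KversusO} to an arbitrary admissible $E$ to produce a competitor $E'$ for the $\omega$-cowaist infimum with $\|R^{E'}\|\le c(m)\|R^E\|$, then take the infimum over $E$. The only difference is that you spell out the extended-real bookkeeping more explicitly, which the paper leaves implicit.
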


\begin{proof}
If there are no admissible bundles over $M$, then $K\cw(M) = 0$ and there is nothing to show.
Thus, let $E\to M$ be admissible and let $E'$ be the corresponding bundle from Lemma~\ref{lem:KversusO}.
Then $\int_M \omega\wedge[\ch(E')-\rk(E)]\neq 0$ and
\[
c(m)^{-1}\cdot  \omega\cw(M)^{-1} \le  c(m)^{-1} \cdot\|R^{E'}\| \le \|R^E\| .
\]
Taking the infimum over all admissible $E$ concludes the proof.
\end{proof}

Note that the constant $c(m)$ does not depend on the form $\omega$.

\section{An application}

Let $M$ be a Riemannian manifold, let $S^+,S^-\to M$ be complex vector bundles equipped with Hermitian metrics and let $D$ be a differential operator of first order mapping sections of $S^+$ to sections of $S^-$.
We restrict our attention to operators such that $\begin{pmatrix}0 & D^* \\ D & 0\end{pmatrix} \colon S^+ \oplus S^- \to S^+ \oplus S^-$ is a generalized Dirac operator in the sense of Gromov and Lawson, see \cite{GL}*{Section~1}.
Here $D^*$ denotes the formally adjoint operator of $D$.
We then call $D$ a GL-Dirac operator for short.
After interchanging the roles of $S^+$ and $S^-$, the formal adjoint $D^*$ is again a GL-Dirac operator.

In particular, the symbol of $D$ defines a multiplication $TM \otimes S^{\pm} \to S^{\mp}$ satisfying the Clifford relations, and the bundles $S^\pm$ are equipped with metric connections $\nabla^{S^\pm}$ whose curvature tensors we denote by $R^{S^\pm}$.
The operator $D$ satisfies the Weitzenböck formulas
\begin{align*}
D^*D &= (\nabla^{S^+})^*\nabla^{S^+} + \K^+, \\
DD^* &= (\nabla^{S^-})^*\nabla^{S^-} + \K^-
\end{align*}
where $\K^\pm = \frac12\sum_{jk}e_j\cdot e_k\cdot R^{S^\pm}(e_j,e_k)$, see Proposition~2.5 in \cite{GL}.
Here  $(\nabla^{S^\pm})^*$ denotes the formally adjoint operator of $\nabla^{S^{\pm}}$.

Given a GL-Dirac operator $D$ and a Hermitian vector bundle with metric connection, one defines the \emph{twisted} Dirac operator $D^E$ locally by
\[
D^E = \sum_{j} (e_j\cdot \otimes \id)\nabla^{S^+\otimes E}_{e_j}
\]
for some local orthonormal frame $(e_1, \ldots, e_n)$ of $TM$. 
The twisted Dirac operator maps sections of $S^+\otimes E$ to sections of $S^-\otimes E$ and is again a GL-Dirac operator.

If $M$ is a Riemannian manifold of dimension $n$ with boundary $\dM$, then we denote by $H\in C^\infty(\dM,\R)$ the mean curvature of the boundary, defined as $\tfrac{1}{n-1}$ times the trace of the second fundamental form of $\dM \subset M$. 
The sign convention is such that $H$ is positive if the mean curvature vector field is inward pointing.
The mean curvature of a Euclidean ball is positive, for example.
We say that the boundary is \emph{mean convex} if $H\ge0$. 

Given the GL-Dirac operator $D$ on $M$, there is an adapted Dirac operator $A$ over $\dM$, acting on sections of $S^+|_{\dM}$.
It is defined by 
\[
A = -\nu\cdot D - \nabla^{S^+}_\nu + \tfrac{n-1}{2}H .
\]
Here $\nu$ is the inward pointing unit normal vector field along $\dM$.
The operator $A$ anticommutes with Clifford multiplication by $\nu$.
Performing the integration by parts in the Weitzenböck formula, we find for smooth sections $\phi$ of $S^+$:
\begin{equation}
\int_M \big[|D\phi|^2 - |\nabla^{S^+}\phi|^2 - \<\K^+\phi,\phi\>\big] = \int_{\dM} \<(\tfrac{n-1}{2}H-A)\phi,\phi\>,
\label{eq:WeitzenIntegrated}
\end{equation}
see Equation (27) in \cite{BB2}.
We say that a sufficiently smooth section $\phi$ of $S^+$ satisfies the \emph{strong Atiyah-Patodi-Singer (APS) boundary condition} if $\phi|_{\dM}$ is contained in the sum of the $L^2$-eigenspaces of $A$ to negative eigenvalues.
We say it satisfies the \emph{weak APS boundary condition} if $\phi|_{\dM}$ is contained in the sum of the eigenspaces to nonpositive eigenvalues.

Associated to $D$ there is a mixed differential form $\omega=1+\omega_1+\cdots+\omega_{m}$ where $\omega_j$ has degree $2j$ which is manufactured out of the short-time asymptotics of the corresponding heat kernel. 
By the Atiyah-Patodi-Singer index theorem \cite{APS1}, it has the property that each twisted operator $D^E$ has the index
\begin{align}
\mathrm{ind}(D^{E}) = \int_M\omega\wedge\ch(E) + \text{ boundary contribution},
\label{eq:APS0}
\end{align}
if we impose weak or strong APS boundary conditions.
Denoting by $A^E$ the adapted operator for the GL-Dirac operator $D^E$, the boundary contribution involves the $\eta$-invariant of $A^E$, a transgression term and the dimension of the kernel of $A^E$.

\begin{theorem} \label{thm:omega-waist-inequality} 
Let $M$ be a $2m$-dimensional compact oriented Riemannian manifold with (possibly empty) mean convex boundary $\dM$.
Let $D$ be a GL-Dirac operator with index form $\omega$.
Let $\K^\pm$ be the curvature terms in the Weitzenböck formulas for $D$.
Suppose that $\K^+\ge \kappa$ and $\K^-\ge \kappa$ in the sense of symmetric endomorphism where $\kappa>0$ is a positive constant.
Then
\[
\omega\cw(M) \le \frac{m(2m-1)}{\kappa} .
\]
\end{theorem}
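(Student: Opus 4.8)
The plan is to argue by contradiction: suppose $\omega\cw(M) > \frac{m(2m-1)}{\kappa}$. Then there exists a boundary-adapted Hermitian bundle $E$ with metric connection such that $\int_M \omega\wedge[\ch(E)-\rk(E)] \neq 0$ and $\|R^E\| < \frac{\kappa}{m(2m-1)}$. I would then study the twisted GL-Dirac operator $D^E$ together with the adapted boundary operator $A^E$, imposing weak or strong APS boundary conditions so that the APS index theorem \eqref{eq:APS0} applies. The first key step is to compare $\ind(D^E)$ with $\ind(D^{\underline{\mathbb{C}}^r})$ where $\underline{\mathbb{C}}^r$ is the trivial bundle of rank $r=\rk(E)$: since $E$ is boundary-adapted (trivial near $\dM$), the boundary contributions in the APS formula cancel, and one obtains $\ind(D^E) - r\cdot\ind(D) = \int_M \omega\wedge[\ch(E)-\rk(E)] \neq 0$. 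Hence either $D^E$ or a copy of $D$ (equivalently $D^{\underline{\mathbb{C}}}$) has nonzero index, so in particular one of the twisted operators $D^{E}$ or $D^{\underline{\mathbb{C}}}$ has nontrivial kernel (or cokernel, handled by passing to $D^*$) under the APS boundary condition.

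The second key step is the Bochner/Weitzenböck vanishing argument for $D^E$ with APS boundary conditions. Starting from the integrated Weitzenböck identity \eqref{eq:WeitzenIntegrated} applied to the twisted operator — with curvature term $\K^{+,E} = \K^+\otimes\id_E + \mathfrak{R}^E$, where $\mathfrak{R}^E = \frac12\sum_{jk} e_j\cdot e_k\cdot(\id_{S^+}\otimes R^E(e_j,e_k))$ is the twisting curvature contribution — I would estimate the operator norm of $\mathfrak{R}^E$. The standard bound is $|\mathfrak{R}^E| \le \frac{n(n-1)}{2}\|R^E\| = m(2m-1)\|R^E\|$, obtained by counting the $\binom{n}{2}$ pairs $(j,k)$, noting $|e_j\cdot e_k| = 1$ on $S^+$, and using the definition of $\|R^E\|$ as the sup of operator norms of $R^E(X,Y)$ over unit $X,Y$. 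With the hypothesis $\K^+ \ge \kappa$ and $\|R^E\| < \frac{\kappa}{m(2m-1)}$, this gives $\K^{+,E} = \K^+\otimes\id + \mathfrak{R}^E \ge \kappa - m(2m-1)\|R^E\| > 0$ pointwise as a symmetric endomorphism. For a section $\phi$ of $S^+\otimes E$ satisfying the APS boundary condition and lying in $\ker D^E$, the right-hand side of \eqref{eq:WeitzenIntegrated} is $\le 0$: indeed $\int_{\dM}\langle(\tfrac{n-1}{2}H - A^E)\phi,\phi\rangle \le 0$ because $H \ge 0$ (mean convexity) and $\phi|_{\dM}$ lies in the nonpositive eigenspaces of $A^E$, so $\langle -A^E\phi,\phi\rangle \le 0$ there. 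But then $0 \le \int_M(-|\nabla\phi|^2 - \langle\K^{+,E}\phi,\phi\rangle) \le -\int_M\langle\K^{+,E}\phi,\phi\rangle \le 0$ forces $\phi \equiv 0$ by the strict positivity of $\K^{+,E}$. The same argument applied to $D^*$ (which, as noted in the text, is again a GL-Dirac operator, with curvature term $\K^-\ge\kappa$) kills the cokernel of $D^E$, so $\ind(D^E) = 0$; and the untwisted case $E = \underline{\mathbb{C}}$ gives $\ind(D) = 0$ as well.

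Combining the two steps: from step one $\ind(D^E) - r\cdot\ind(D) \neq 0$, while from step two $\ind(D^E) = 0$ and $\ind(D) = 0$, a contradiction. Therefore $\omega\cw(M) \le \frac{m(2m-1)}{\kappa}$. The main obstacle I anticipate is making the APS boundary condition interact cleanly with the vanishing argument — one must verify that the relevant index (and the kernel) is well-defined and finite-dimensional for the GL-Dirac operator under APS conditions, that elliptic regularity gives smooth enough solutions for \eqref{eq:WeitzenIntegrated} to hold, and that the boundary term has the claimed sign; this is where the choice between the strong and weak APS condition and the precise self-adjointness properties of $A^E$ must be handled with care, though all of this is essentially contained in \cite{BB2} and \cite{APS1}. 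The curvature norm estimate $|\mathfrak{R}^E| \le m(2m-1)\|R^E\|$ is routine but must be done with the correct combinatorial constant to match the stated bound exactly.
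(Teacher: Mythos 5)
Your proof follows essentially the same route as the paper: compare $\ind(D^E)$ with the index of the trivially-twisted operator (the paper uses $\ind(D^{E_0^r})$, you use the equivalent $r\cdot\ind(D)$), cancel boundary contributions using that $E$ is boundary-adapted, deduce one of the two indices is nonzero, and then run the Bochner/Weitzenb\"ock vanishing argument with the weak/strong APS boundary conditions and the estimate $\big|\tfrac12\sum_{j,k}e_je_k\otimes R^E(e_j,e_k)\big|\le m(2m-1)\|R^E\|$; the paper runs this directly and takes an infimum rather than by contradiction, but that is cosmetic.

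One local slip to fix: you assert that $\int_{\dM}\langle(\tfrac{n-1}{2}H-A^E)\phi,\phi\rangle\le 0$ and that $\langle -A^E\phi,\phi\rangle\le 0$. For $\phi|_{\dM}$ in the nonpositive eigenspaces of $A^E$ one has $\langle A^E\phi,\phi\rangle\le 0$, hence $\langle -A^E\phi,\phi\rangle\ge 0$, and together with $H\ge 0$ the boundary integral is $\ge 0$, not $\le 0$. It is precisely this nonnegativity that is needed — and that your very next inequality $0\le\int_M(-|\nabla\phi|^2-\langle\K^{E,+}\phi,\phi\rangle)$ silently uses — to force $\phi\equiv 0$ from the strict positivity of $\K^{E,+}$. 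With the sign corrected, the argument coincides with the paper's.
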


\begin{proof}
Let $E\to M$ be boundary-adapted such that $\int_M \omega\wedge[\ch(E)-\rk(E)]\ne0$.
If there are no such bundles, then $\omega\cw(M)=0$ and there is nothing to show.
We write $r=\rk(E)$ and denote by $E_0^r$ the trivial flat bundle of rank $r$.
We impose the weak APS boundary condition.
Now \eqref{eq:APS0} yields
\begin{align}
\mathrm{ind}(D^{E}) &= \int_M\omega\wedge\ch(E) + \text{ boundary contribution},
\label{eq:APS1} \\
\mathrm{ind}(D^{E_0^r}) &= r\int_M\omega + \text{ boundary contribution}.
\label{eq:APS2}
\end{align}
The boundary contributions in \eqref{eq:APS1} and \eqref{eq:APS2} coincide because $E$ is boundary-adapted and hence $D^{E}$ and $D^{E_0^r}$ coincide in a neighborhood of $\dM$.
Therefore,
\[
\mathrm{ind}(D^{E}) - \mathrm{ind}(D^{E_0^r}) 
= 
\int_M \omega\wedge\ch(E) - r\int_M\omega 
= 
\int_M \omega\wedge[\ch(E) - \rk(E)] 
\neq 0 .
\]
It follows that $\mathrm{ind}(D^{E})\neq0$ or $\mathrm{ind}(D^{E_0^r})\neq0$.
We discuss the case $\mathrm{ind}(D^{E})\neq0$, the second case being even simpler.

If $\mathrm{ind}(D^{E})>0$, then we find a smooth section $\phi$ of $S^+\otimes E$ in the kernel of $D^{E}$.
Inserting this $\phi$ into \eqref{eq:WeitzenIntegrated} with $D^{E}$ instead of $D$, we get
\begin{align}
0
&=
\int_M \big[|\nabla^{S^+\otimes E}\phi|^2 + \langle\K^{E,+}\phi,\phi\rangle\big] + \int_{\dM} \<(\tfrac{2m-1}{2}H-A)\phi,\phi\> 
\ge
\int_M \langle\K^{E,+}\phi,\phi\rangle
\label{eq:pos}
\end{align}
since all other terms are nonnegative.
Here we use $H\ge0$ and the fact that $\phi$ satisfies the weak APS boundary condition.
 
The curvature term in the Weitzenböck formula for $D^{E}$ is given by 
\[
\K^{E,\pm}=\K^{\pm}\otimes\id+\tfrac12\sum_{j,k=1}^{2m} e_je_k\otimes R^{E}(e_j,e_k).
\]
The operator norm of the correction term satisfies
\[
\Big|\tfrac12\sum_{jk}e_je_k\otimes R^{E}(e_j,e_k)\Big| \le m(2m-1)\|R^{E}\| .
\]
If we had $m(2m-1)\|R^{E}\|<\kappa$, then $\K^{E,+}$ would be positive as an endomorphism, contradicting \eqref{eq:pos}.
Therefore, $\|R^{E}\|\ge\frac{\kappa}{m(2m-1)}$.

If $\mathrm{ind}(D^{E})<0$, the adjoint operator has a nontrivial kernel.
Since $D^E$ is formally selfadjoint and the strong APS boundary condition is adjoint to the weak APS boundary condition\footnote{This uses that $A$ anticommutes with $\nu$, see Theorem~4.6 and Example~5.12 in \cite{BB2}.}, this means that we find a smooth section $\phi$ of $S^-\otimes E$ satisfying the strong APS boundary condition with $(D^{E})^*\phi=0$.
Now the proof proceeds as before and we again get $\|R^E\| \ge \frac{\kappa}{m(2m-1)}$.

Taking the infimum over all $E$ yields $\omega\cw(M)\le \frac{m(2m-1)}{\kappa}$.
\end{proof}

Theorems~\ref{thm:WaistComparison} and \ref{thm:omega-waist-inequality} combine to give

\begin{corollary}
Under the assumptions of Theorem~\ref{thm:omega-waist-inequality} we have 
\[
K\cw(M) \le \frac{c_1(m)}{\kappa} 
\]
where $c_1(m)$ is a positive constant which depends only on $m$.
\hfill\qed
\end{corollary}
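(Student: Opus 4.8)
The plan is simply to chain Theorems~\ref{thm:WaistComparison} and \ref{thm:omega-waist-inequality}, so the ``proof'' is essentially bookkeeping. The assumptions of Theorem~\ref{thm:omega-waist-inequality} already furnish a GL-Dirac operator $D$ on $M$ together with its index form $\omega = 1 + \omega_1 + \cdots + \omega_m$, where $\omega_j$ has degree $2j$. First I would observe that this $\omega$ is exactly a smooth mixed differential form of the kind admitted in Theorem~\ref{thm:WaistComparison}, since that theorem imposes no condition on $\omega$ beyond the degrees of its homogeneous components. Hence Theorem~\ref{thm:WaistComparison} applies and gives
\[
K\cw(M) \le c(m)\cdot\omega\cw(M)
\]
with $c(m)$ a constant depending only on $m$.

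Next I would invoke Theorem~\ref{thm:omega-waist-inequality} itself: since $\dM$ is mean convex and the Weitzenböck curvature terms of $D$ satisfy $\K^+\ge\kappa$ and $\K^-\ge\kappa$ for the positive constant $\kappa$, it yields $\omega\cw(M) \le \frac{m(2m-1)}{\kappa}$. Substituting this bound into the previous inequality and setting $c_1(m) := c(m)\cdot m(2m-1)$, which is a positive constant depending only on $m$, produces $K\cw(M) \le \frac{c_1(m)}{\kappa}$, as claimed. I do not expect any genuine obstacle here; the only point worth a sentence is the compatibility of the hypotheses — specifically, that the form to which Theorem~\ref{thm:WaistComparison} is applied may be taken to be the index form of the given operator $D$, and that the mean convexity of the boundary together with the curvature lower bounds are precisely what Theorem~\ref{thm:omega-waist-inequality} requires.
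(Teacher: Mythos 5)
Your proof is correct and is exactly the argument the paper intends: the corollary is stated immediately after the remark that Theorems~\ref{thm:WaistComparison} and \ref{thm:omega-waist-inequality} combine, with $c_1(m)=c(m)\cdot m(2m-1)$. Nothing further is needed.
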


\begin{corollary}\label{cor:nonexist}
An oriented even-dimensional compact differentiable manifold $M$ with $K\cw(M)=\infty$ does not admit a Riemannian metric with mean convex boundary and a GL-Dirac operator such that $\K^+$ and $\K^-$ are both positive.
\hfill\qed
\end{corollary}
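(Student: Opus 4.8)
The plan is to argue by contradiction and reduce immediately to the corollary just proved. Suppose $M$ is an oriented even-dimensional compact differentiable manifold with $K\cw(M)=\infty$ which nevertheless carries a Riemannian metric $g$ with mean convex boundary, together with a GL-Dirac operator $D$ whose Weitzenböck curvature terms $\K^+$ and $\K^-$ are both positive as symmetric endomorphisms at every point of $M$.

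First I would promote the pointwise positivity of $\K^\pm$ to a uniform one. The fields $\K^+$ and $\K^-$ are smooth sections of the bundle of symmetric endomorphisms, so the function assigning to $x\in M$ the minimum of the smallest eigenvalues of $\K^+_x$ and $\K^-_x$ is continuous and everywhere positive; since $M$ is compact it attains a positive minimum $\kappa>0$. Hence $\K^+\ge\kappa$ and $\K^-\ge\kappa$ in the sense of symmetric endomorphisms, so the hypotheses of Theorem~\ref{thm:omega-waist-inequality} (and of the preceding corollary) are met for $(M,g)$ and $D$ with this value of $\kappa$.

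Now I would invoke the preceding corollary, which yields $K\cw(M)\le c_1(m)/\kappa<\infty$, the $K$-cowaist being computed with respect to $g$. By the remark following Definition~\ref{def:Kcowaist}, the condition $K\cw(M)=\infty$ does not depend on the choice of metric on the compact manifold $M$, so our standing hypothesis also gives $K\cw(M)=\infty$ for $(M,g)$. This is the desired contradiction, and the corollary follows. The argument is essentially immediate once the preceding corollary is available; the only step requiring (elementary) care is the compactness argument that produces the uniform lower bound $\kappa$ from the mere positivity of $\K^+$ and $\K^-$.
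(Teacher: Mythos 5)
Your proof is correct and is essentially the argument the paper intends (the corollary is marked $\qed$ with no written proof, being an immediate consequence of the preceding corollary). The two points you spell out — extracting a uniform lower bound $\kappa>0$ by compactness, and invoking the remark that the condition $K\cw(M)=\infty$ is metric-independent so that it applies to the putative metric $g$ — are precisely the small details the paper leaves implicit.
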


\begin{example}
If $M$ is a spin manifold and $D$ is the spinorial Dirac operator, then $\K^\pm=\frac{\scal}{4}$ where $\scal$ denotes the scalar curvature of $M$.
Corollary~\ref{cor:nonexist} says that compact spin manifolds $M$ with $K\cw(M)=\infty$ do not admit metrics with positive scalar curvature and mean convex boundary, cf.\ Theorem~19 in \cite{BH2023}.
In this example, $\omega=\hat{A}$ is the $\hat{A}$-form.
\end{example}

\begin{example}
Let $M$ be a compact Riemannian spin$^c$ manifold with determinant bundle $L$.
Let $\Omega$ be a real $2$-form such that $\frac{\Omega}{2\pi}$ represents the first Chern class of $L$ in deRham cohomology.
Then there exists a metric connection on $L$ whose curvature is given by $i\Omega$.
The curvature contribution to the Weitzenböck formula for the spinorial Dirac operator is given by $\K^\pm=\frac{\scal}{4}- \frac{1}{2}\Omega$ where $\Omega$ acts by Clifford multiplication, see Theorem~D.12 in \cite{LM}.

At each point of $M$, we can find an orthonormal basis $e_1,\dots,e_m,e_{m+1},\dots,e_{2m}$ of the cotangent space such that 
\[
\Omega = \sum_{j=1}^m \lambda_j e_j\wedge e_{m+j}
\]
for $\lambda_j\in\R$.
The norm $|\Omega|=\sum_{j=1}^m|\lambda_j|$ is defined independently of the choice of basis.
For any spinor $\phi$ we have 
\[
|\<\Omega\cdot\phi,\phi\>| \le  \sum_{j=1}^m|\<\lambda_je_j\cdot e_{m+j}\cdot\phi,\phi\>| \le |\Omega||\phi|^2.
\]
Thus, $\K^\pm$ is positive provided $\scal > 2|\Omega|$.
Hence, if $\Omega$ is a real $2$-form such that $\frac{\Omega}{2\pi}$ represents the first Chern class of the determinant bundle of the spin$^c$ manifold $M$ in deRham cohomology, and $M$ has a Riemannian metric with mean convex boundary with $\scal> 2|\Omega|$, then $K\cw(M)<\infty$.
In this case, $\omega = \hat{A}\wedge \exp\big(\frac{\Omega}{4\pi}\big)$.
\end{example}


\begin{bibdiv}
\begin{biblist}

\bib{A1989}{book}{
   author={Atiyah, M.F.},
   title={$K$-theory},
   series={Advanced Book Classics},
   edition={2},
   publisher={Addison-Wesley Publishing Company, Advanced Book Program, Redwood City, CA},
   date={1989},
   pages={xx+216},
   isbn={0-201-09394-4},
}

\bib{APS1}{article}{
   author={Atiyah, M.F.},
   author={Patodi, V.K.},
   author={Singer, I.M.},
   title={Spectral asymmetry and Riemannian geometry. I},
   journal={Math. Proc. Cambridge Philos. Soc.},
   volume={77},
   date={1975},
   pages={43--69},
   issn={0305-0041},
}

\bib{BB2}{article}{
   author={B\"{a}r, C.},
   author={Ballmann, W.},
   title={Guide to elliptic boundary value problems for Dirac-type operators},
   conference={
      title={Arbeitstagung Bonn 2013},
   },
   book={
      series={Progr. Math.},
      volume={319},
      publisher={Birkh\"{a}user/Springer, Cham},
   },
   isbn={978-3-319-43646-3},
   isbn={978-3-319-43648-7},
   date={2016},
   pages={43--80},
}

\bib{BH2023}{article}{
   author={B\"{a}r, C.},
   author={Hanke, B.},
   title={Boundary conditions for scalar curvature},
   conference={
      title={Perspectives in scalar curvature. Vol. 2},
   },
   book={
      publisher={World Sci. Publ., Hackensack, NJ},
   },
   isbn={978-981-124-999-0},
   isbn={978-981-124-935-8},
   isbn={978-981-124-936-5},
   date={2023},
   pages={325--377},
   review={\MR{4577919}},
}

\bib{Fukumoto2015}{article}{ 
 AUTHOR = {Fukumoto, Y.},
     TITLE = {Invariance of finiteness of {K}-area under surgery},
   JOURNAL = {Geom. Dedicata},
    VOLUME = {176},
      YEAR = {2015},
     PAGES = {175--183}
 }

\bib{G1996}{incollection}{
   author={Gromov, M.},
   title={Positive curvature, macroscopic dimension, spectral gaps and higher signatures},
   booktitle={Functional analysis on the eve of the 21st century, Vol.~II},
   address={New Brunswick, NJ},
   series={Progr. Math.},
   volume={132},
   publisher={Birkh\"{a}user, Boston, MA},
   date={1996},
   pages={1--213},
}

\bib{Gromov2020}{incollection}{
   author={Gromov, M.},
   title={Four lectures on scalar curvature},
   booktitle={Perspectives in Scalar Curvature, Vol.~I},
   publisher={World Scientific},
   date={2023},
   pages={1--514},
}

\bib{GL}{article}{
   author={Gromov, M.},
   author={Lawson, H.B.},
   title={Positive scalar curvature and the Dirac operator on complete Riemannian manifolds},
   journal={Inst. Hautes \'{E}tudes Sci. Publ. Math.},
   number={58},
   date={1983},
   pages={83--196 (1984)},
   issn={0073-8301},
}

 \bib{Hanke2012}{incollection}{ 
    AUTHOR={Hanke, B.}, 
      TITLE = {Positive scalar curvature, {K}-area and essentialness},
     editor = {B\"{a}r, C.},
     editor = {Lohkamp, J.},
     editor = {Schwarz, M.}, 
  BOOKTITLE = {Global differential geometry},
     SERIES = {Springer Proc. Math.},
     VOLUME = {17},
      PAGES = {275--302},
  PUBLISHER = {Springer, Heidelberg},
       YEAR = {2012}
 }

 \bib{Hunger2019}{article}{ 
 AUTHOR = {Hunger, B.},
      TITLE = {Almost flat bundles and homological invariance of infinite {K}-area},
    JOURNAL = {New York J. Math.},
      VOLUME = {25},
       YEAR = {2019},
      PAGES = {687--722}
 }

\bib{Hunger2021}{article}{ 
   author={Hunger, B.}, 
   title={Asymptotically flat Fredholm bundles and assembly}, 
   journal={J. Topol. Anal.}, 
   note={Online Ready: https://doi.org/10.1142/S1793525323500413}, 
   year={2024}, 
}

\bib{LM}{book}{
   author={Lawson, H.B.},
   author={Michelsohn, M.-L.},
   title={Spin geometry},
   series={Princeton Mathematical Series},
   volume={38},
   publisher={Princeton University Press, Princeton, NJ},
   date={1989},
   pages={xii+427},
   isbn={0-691-08542-0},
}

\bib{Listing2013}{article}{
   author={Listing, Mario},
   title={Homology of finite K-area},
   journal={Math. Z.},
   volume={275},
   date={2013},
   number={1-2},
   pages={91--107},
   issn={0025-5874},
   doi={10.1007/s00209-012-1124-7},
}

\bib{Wang2023}{article}{
   author={Wang, X.},
   title={On a relation between the K-cowaist and the $\hat {\mathrm{A}}$-cowaist},
   journal={Proc. Amer. Math. Soc.},
   volume={151},
   date={2023},
   number={11},
   pages={4983--4990},
}

\end{biblist}
\end{bibdiv}

\end{document}